\documentclass[12pt,fleqn]{amsart} 

\usepackage{amsthm}
\usepackage{amsfonts}
\usepackage[english]{babel}
\usepackage{graphicx}
\usepackage{soul}
\usepackage{stfloats}
\usepackage{morefloats}
\usepackage{cite}
\usepackage{lscape}
\usepackage{epstopdf}
\usepackage{braket}
\usepackage[lite]{amsrefs}
\usepackage{mathbbol}
\usepackage{tikz,tikz-cd}
\usepackage{shuffle}
\usepackage{bbding}

\usepackage{algorithm,algorithmicx,algpseudocode}

\setlength{\textwidth}{6.5in}
\setlength{\topmargin}{-0.2in} 
\setlength{\textheight}{9in}
\setlength{\oddsidemargin}{0in}
\setlength{\evensidemargin}{0in}
\usepackage{amsmath,amstext,amsopn,amsfonts,eucal,amssymb}
\usepackage{graphicx,wrapfig,url}

\newtheorem{theorem}{Theorem}[section]
\newtheorem{corollary}[theorem]{Corollary}
\newtheorem{lemma}[theorem]{Lemma}

\newtheorem{definition}[theorem]{Definition}

\newtheorem{remark}[theorem]{Remark}

\newtheorem{hypothesis}[theorem]{Hypothesis}



\begin{document}

	\title[Projection Methods]{Projection Methods for Operator Learning and Universal Approximation}

	\author{Emanuele Zappala} 
	\address{Department of Mathematics and Statistics, Idaho State University\\
		Physical Science Complex |  921 S. 8th Ave., Stop 8085 | Pocatello, ID 83209} 
	\email{emanuelezappala@isu.edu}

	\maketitle

	\begin{abstract}
		We obtain a new universal approximation theorem for continuous (possibly nonlinear) operators on arbitrary Banach spaces using the Leray-Schauder mapping. Moreover, we introduce and study a method for operator learning in Banach spaces $L^p$ of functions with multiple variables, based on orthogonal projections on polynomial bases. We derive a universal approximation result for operators where we learn a linear projection and a finite dimensional mapping under some additional assumptions. For the case of $p=2$, we give some sufficient conditions for the approximation results to hold. This article serves as the theoretical framework for a deep learning methodology in operator learning.
	\end{abstract}
	
	MSC: 68T07 (primary); 46B28 (secondary)
	
	Keywords: neural operator; nonlinear operator; Leray-Schauder mapping.

	\date{\empty}

	\tableofcontents

	\section{Introduction}
	
	Operator learning is a branch of deep learning involved with approximating (potentially highly nonlinear) continuous operators between Banach spaces. The interest of operator learning lies in the fact that it allows to model complex phenomena, e.g. dynamical systems, whose underlying governing equations are not known \cite{DeepOnet,ANIE_NAT,NIDE,operator_learning_survey}. The study of operator learning was initiated by the theoretical work \cite{chen}, whose implementation was given in \cite{DeepOnet}. Since then, this field has expanded significantly both in its theoretical and applied scope to encompass a variety of architectures \cite{fanaskov2023spectral,yamazaki2025finite,wu2025po,liu2024render,ANIE_NAT,NIDE,nelsen2024operator,li2020multipole,li2025harnessing}
	
	Projection methods, e.g. Galerkin methods, are approaches for finding solutions of an operator equation by approximating this on prescribed subspaces through a projection \cite{KVZ,Fle}. After projecting the operator equation on a subspace, it is not necessarily true that this equation has a solution. When projected solutions exist, upon increasing the dimension of the subspaces one would want the solutions to converge to a solution of the original equation. This is not necessarily the case. The main question of projection methods is whether projected solutions exist, and converge to a solution of the original non-projected equation. 
	
	We can formulate the problem of operator learning in relation to projection methods as follows. We want to learn projections on (finite dimensional) subspaces, and a map between subspaces such that we can approximate a target operator between Banach spaces. This operator is assumed to satisfy an operator equation whose solutions model the data, as formulated in \cite{ANIE_NAT,NIDE} for the case of integral and integro-differential operators. Our problem is therefore two-fold. First, we want to learn an operator whose solutions of a corresponding operator equation model data. Second, we want to approximate the operator on a projected space, and solve the projected operator equation. A similar approach for a class of integral operators was employed in \cite{Spectral}. More general overviews of operator learning techniques in their theoretical and practical frameworks can be found in \cite{operator_learning_survey,boulle2024mathematical,jha2025theory}.
	
	In the present article we theoretically address these problems, and derive a methodology for operator learning based on learning projections and mappings between projected spaces. Here, we obtain a universal approximation theorem for operators between Banach spaces using the Leray-Schauder mappings.  
	
	To contextualize this work, we summarize the hypotheses, approximation capabilities, and main implementational considerations related to some operator learning frameworks that are closely related to the present work. An overview of these models is found in Table~\ref{tab:models}. 
		
		\begin{table}\label{tab:models}
				\resizebox{\textwidth}{!}{\begin{tabular}{||c|c|c|c|c|c||}
					\hline 
					Model & Spaces & Hypotheses & Approximation & Known Proj & Known Bases \\
					\hline
					Leray-Schauder & Banach & Cont operator & Univ & $\checkmark / \times$ & $\checkmark / \times$ \\
					\hline
					$L^p$ Proj & $L^p$ &  Cont operator & Univ & $\checkmark$ & $\checkmark$ \\
					\hline
					DeepONet \cite{DeepOnet} &  Banach to uniform &  Cont operator & Univ & $\times$ & $\times$ \\
					\hline 
					NIE \cite{ANIE_NAT} &  Compact open top &  Cont integral operator & Univ & $\times$ & $\times$ \\
					\hline 
					ANIE \cite{ANIE_NAT} &  Compact open top &  Cont integral operator & Univ & $\times$ & $\times$ \\
					\hline 
					Spectral NIE \cite{Spectral} &  H\"older &  Frech\'et integral operator & Univ & $\checkmark$ & $\checkmark$ \\
					\hline 
					Spectral NO\cite{fanaskov2023spectral} & ? &  ? & ? & $\checkmark$ & $\checkmark$ \\
					\hline 
					PO-CKAN \cite{wu2025po}& ? &  ? & ? & $\times$ & $\times$ \\
					\hline
					OPNO \cite{liu2024render} & H\"older to $L^2$ & Cont operator & Univ & $\times$ & $\checkmark$ \\
					\hline
					FEPINN \cite{yamazaki2025finite} &  H\"older to $L^2$ & PD operator & ? & $\times$ & $\checkmark$ \\
					\hline
					RFM \cite{nelsen2024operator} & H\"older to $L^2$  & PD operator & ? & $\times$ & $\checkmark$ \\
					\hline
					PCA-Net \cite{hesthaven2018non,bhattacharya2021model}& Hilbert space  & $\mu$-meas+ & Prob $>1-\delta$ & $\checkmark$ & $\times$ \\
					\hline
					CNO \cite{raonic2023convolutional}& H\"older to $L^p$  & PD operator+ & Univ & $\times$ & $\times$ \\
					\hline
					MGNO \cite{li2020multipole}& $L^2$ & ? & ? & $\times$ & $\checkmark$ \\
					\hline
					AMG \cite{li2025harnessing}& $L^2$ & linear integral operator & ? & $\times$ & $\checkmark$ \\
					\hline
				\end{tabular}}
				\caption{Summary of varoius neural operator architectures with their theoretical hypotheses of applicability, approximation capabilities, and construction of approximations for projection on finite dimensional subspaces}
		\end{table}
	
		For each model in Table~\ref{tab:models}, we list their domain and codomain, the theoretical hypotheses under which their approximation bounds are known to hold, the corresponding approximation rate, and whether the finite-dimensional reduction methodology used to approximate the operator relies on a known or learned projection and basis. Here, by known we mean that the projection and/or basis is analytically defined rather than learned. When a `+' symbol appears among the hypotheses, it indicates that additional technical assumptions are required and are omitted for brevity. The question marks in Table~\ref{tab:models} indicate entries for which results are not currently available in the literature. More in detail, a ``?'' in the ``Spaces'' column indicates that the corresponding model has not been studied with respect to a specific normed function space. A ``?''  in the ``Hypotheses'' column indicates that the conditions under which the associated neural operators admit theoretical guarantees are not known. Finally, a ``?'' in the ``Approximation'' column, indicates that approximation results for the corresponding model have not been established yet.
		
		The first two models listed, Leray-Schauder and $L^p$ proj, are introduced in the present article. These models are formulated in a general theoretical framework, but they also admit concrete realizations. In particular, the Leray-Schauder model has been implemented in \cite{leray_schauder}, where it was extended to include learnable projection and basis components. Both versions are reported in the table, and the corresponding approximation error bounds for the learnable case have also been analyzed in \cite{leray_schauder}, thereby complementing the theoretical analysis provided in the present work. The cited error bounds for PCA-Net are those found in \cite{lanthaler2023operator}. The Leray-Schauder model, as shown below, is a universal approximator for continuous operators between arbitrary Banach spaces. The implementation presented in \cite{leray_schauder} also demonstrates how to address the problem of selecting suitable projection bases and how to enhance model stability by allowing learnable Leray-Schauder projections. The $L^p$ proj model, while retaining universal approximation properties for a broad and practically relevant class of operators in $L^p$ spaces, provides a simpler structure that also permits the theoretical analysis of fixed-point problems (i.e., operator equations of the second kind) via projection methods. It shows that the one-dimensional version proposed in \cite{Spectral} can be extended to multivariate polynomial bases by adopting a modified theoretical framework, leading to milder assumptions and a broader range of applicability. The numerical experiments in \cite{Spectral} further confirm that the approach considered in the present work can achieve high accuracy and stability in interpolation problems. 
		
		In general, different basis representations play complementary roles across families of problems. Fourier bases are particularly effective for periodic systems with frequency localization, while wavelet bases are well suited for capturing discontinuities and localized features due to their multiresolution nature. Graph bases present natural advantages when geometric information from the domain needs to be embedded into the problem. Polynomial bases, by contrast, are advantageous in smooth, nonlocal settings|such as integral equation formulations|and remain applicable even in the absence of periodicity, though they are less effective for discontinuous phenomena.
		
		These observations, along with the theoretical guarantees shown in Table~\ref{tab:models}, provide criteria for choosing neural operator models depending on the problems considered in practice. For instance, when considering problems related to PDEs, a natural choice might fall upon FEPINN, RFM, CNO. When smoothness assumptions are well understood, spectral-based approaches such as Spectral NIE or Spectral NO might be preferred.

	The approach based on Leray-Schauder mappings implies that we need to find elements of the Banach spaces that approximate given compact subsets. An algorithm that concretely implements this approach should also obtain the points whose linear subspace will be used for the Leray-Schauder (nonlinear) projection. The concrete implementation of these results has been addressed in \cite{leray_schauder}, which practically implemented part of the theoretical framework given in the present article. However, operator learning in practice is often formulated in concrete spaces of functions. Therefore, a reformulation of the methodology in this setting would be of value. 
	
	To address the latter, we consider the more specific (and important) case of $L^p$ spaces. In this case, we show that given a set of orthogonal polynomials with respect to a quasi-inner product with some additional assumptions, we can find a triple of neural networks defining two projections and a map between the projected spaces that approximate the given operator with arbitrary precision. When $p=2$, and we are working with the Hilbert space $L^2$, we give some simple sufficient conditions for the aforementioned results. We then turn to the problem of approximating solutions of the projected operator equation. We provide in this case some sufficient conditions for the projection to admit solutions for each $n$, and such that the solutions converge to a solution of the operator equation when $n \longrightarrow \infty$.

	The framework developed in the present work relates to the approximation of continuous operarators between general Banach spaces. We notice that this extends the framework of DeepONet \cite{chen,DeepOnet}, as the latter treats the case of maps where the target Banach space is given by the uniform norm. Our approach is also developed more specifically for mappings between $L^p$ spaces. In the approaches considered in this article, both for general Banach spaces and $L^p$ spaces, we provide explicit and analytically defined projecting operators. 
	
	The approach developed in the present article is designed for operator learning in settings where nonlocal operators play a central role, such as in problems arising in plasma physics or computational neuroscience \cite{ANIE_NAT}. Since the general result in Theorem~\ref{thm:Universal} applies to arbitrary Banach spaces, we expect that it will be relevant to problems in the theory of PDEs and integral equations (IEs) formulated on Sobolev or H\"older spaces. The more specialized framework established in Theorem~\ref{thm:linear_universal}, though restricted to a particular case, is highly pertinent to machine learning applications, where mappings are typically learned with $L^p$ losses|namely, within $L^p$ spaces. Our approach is especially advantageous in scenarios where an explicit functional description of the approximation basis and its associated projection is required, as it might be the case in Galerkin-type and kernel methods for PDEs and IEs.

	This article is organized as follows. In Section~\ref{sec:Leray_Schauder} we provide the universal approximation result for general Banach spaces by means of Leray-Schauder mappings. In Section~\ref{sec:L_p} we consider $L^p$ spaces, and obtain a universal approximation result with linear projections on finite spaces of polynomials. In Section~\ref{sec:L_2} we consider the case $p=2$, and give some sufficient conditions for the universal approximation results to be applicable, along with examples. In Section~\ref{sec:Fixed_Point} we consider operator equations for operator learning problems. We determine some conditions under which our framework produces solutions to the projected equations that converge to solutions of the operator equation, reformulated as a fixed point problem. We conclude with some remarks, in Section~\ref{sec:future}, that describe future work on the algorithmic implementation of the deep learning methodology for operator learning based on this theoretical work.

	\section{Nonlinear Projections for Operator Learning in Banach spaces}\label{sec:Leray_Schauder}

	Let $X$ be a Banach space and let $K$ be a compact subset. 
	We recall here the construction of the maps of Leray and Schauder that were used in the proof of their celebrated fixed point theorem. Since $K$ is compact, for any choice of $\epsilon >0$ we can find a finite set  $\{x_i\}_{i=1}^n$ such that $K \subset \bigcup_{i=1}^n B(x_i,\epsilon)$, where $B(x_i,\epsilon)$ is the $\epsilon$-ball around $x_i$. We let $E$ be the span of the elements $x_i$. We define $P : K \longrightarrow E$ by the assignment
			\begin{eqnarray*}
				P x = \sum_{i=1}^{n}\frac{\mu_i(x) x_i}{\sum_{j=1}^n\mu_j(x)},
			\end{eqnarray*}
			where 
			\begin{eqnarray*}
			\mu_i(x) = \begin{cases}
			\epsilon - \|x-x_i\|, \hspace{1cm} \|x-x_i\| \leq \epsilon\\
			0, \hspace{3cm} \|x-x_i\| > \epsilon
			\end{cases}
			\end{eqnarray*}
			for all $i=1, \ldots , n$. 
	Then, $P$ is continuous and, moreover, for each $x\in K$ it holds that $\|x-P(x)\| < \epsilon$. 
	We will refer to these operators as {\it Leray-Schauder projections}, following the same convention as \cite{Topological}, although they are not linear. 
	
	If the elements $x_i$ are in $K$ and satisfy the property that $\|x_i-x_j\| \geq \epsilon$ for each $i\neq j$, it follows that $P(x_i) = x_i$ for each $i$. We want to show that it is always possible to choose $\{x_i\}$ with such property. In fact, let $x_1$ be any element of $K$. If $K \subset B(x_1,\epsilon)$, then there is nothing to prove. Otherwise, let $x_2\in K-B(x_1,\epsilon)$. Then, $\|x_1-x_2\|\geq \epsilon$ by construction. If $K\subset B(x_1,\epsilon)\cup B(x_2,\epsilon)$, then the process stops. Otherwise, we can choose $x_3$ in $K$ which has distance from $x_1$ and $x_2$ larger than or equal to $\epsilon$. So proceeding, we define a sequence $\{x_n\}$ in $K$. Suppose that the process does not stop after finitely many steps. Then, since $K$ is compact in a metric space, it is sequentially compact. Therefore, there is a subsequence $x_{n_k}$ which converges to $x\in K$. For a given $\epsilon' < \frac{\epsilon}{2}$, we find two indices $n_r$ and $n_{r+1}$, for $r$ large enough, such that $\|x_{n_r} - x_{n_{r+1}}\| \leq \|x_{n_r} - x\| + \|x - x_{n_{r+1}}\| < 2\epsilon' < \epsilon$, against the construction of $x_n$. This shows that there is a finite number $n$ such that $K \subset \bigcup_{i=1}^n B(x_i,\epsilon)$, and such that $\|x_i-x_j\| \geq \epsilon$ whenever $i\neq j$. Since $P(x_i) = x_i$, for a suitable choice of $\{x_i\}$, we can think of $P$ as being a ``projection''. However, this property is not used below in practice. 
	
	\begin{theorem}\label{thm:Universal_uniform}
		Let $X$ and $Y$ be Banach spaces, let $T: X\longrightarrow Y$ be a continuous (possibly nonlinear) map, and let $K$ be a compact subset of $X$, such that $T$ is uniformly continuous on an open neighborhood $U$ of $K$. Then, for any choice of $\epsilon > 0$ there exist natural numbers $n,m\in \mathbb N$, finite dimensional subspaces $E_n \subset X$ and $E_m \subset Y$, a  continuous map $P_n : K\longrightarrow E_n$ (defined and continuous on a neighborhood of $K$), and a neural network $f_{n,m} : \mathbb R^n \longrightarrow \mathbb R^m$ such that for every $x\in K$
		\begin{eqnarray}
		||T(x) - \phi_m^{-1}f_{n,m}\phi_nP_n(x)|| < \epsilon,
		\end{eqnarray}
		where $\phi_k : E_k \longrightarrow \mathbb R^k$ indicates an isomorphism between the finite dimensional space $E_k$ and $\mathbb R^k$.  
	\end{theorem}
	\begin{proof}
		Since $T$ is uniformly continuous on $U$, and $U\supset K$, we can choose $\delta>0$ such that $\|T(x_1) - T(x_2)\| < \frac{\epsilon}{3}$ whenever $\|x_1-x_2\| < \delta$, with $x_1,x_2\in U$, and such that $B(x,\delta) \subset U$ whenever $x\in K$.  
		 Corresponding to such choice of $\delta>0$, by compactness of $K$, we find a finite subset  $\{x_i\}_{i=1}^{n'}$ such that $K \subset \bigcup_{i=1}^{n'} B(x_i,\delta)$. 
		 Let $E_n$ be the subspace of $X$ spanned by $x_1, \ldots, x_{n'}$, where $n$ is the dimension of $E_n$. We can find a continuous map $P_n : K \longrightarrow E_n$ such that for every $x\in K$, we have
		\begin{eqnarray}\label{ineq:uniform}
		\|x-P_n(x)\| < \delta.
		\end{eqnarray}
		In particular, notice that $P_n(x)\in U$ for all $x\in K$. Since both $T$ and $P$ are continuous, the set $L = TP_n(K)$ is compact. We can then find a finite subset  $\{y_j\}_{i=1}^{m'}$ such that $L \subset \bigcup_{j=1}^{m'} B(y_j,\frac{\epsilon}{3})$. We define $E_m$ to be the linear span of $y_1, \ldots, y_{m'}$, where $m$ is the dimension of $E_m$.  Applying again the arguements of Leray-Schauder, there exists a continuous map $P_m : L \longrightarrow E_m$ satisfying the property that
		\begin{eqnarray}\label{ineq:Ler-Sch}
		\|x-P_m(x)\| < \frac{\epsilon}{3},
		\end{eqnarray}
		for any choice of $x$ in $L$. 
		We consider an isomorphism $\phi_n: E_n \longrightarrow \mathbb R^n$, obtained by choosing an arbitrary basis in $E_n$. Since any linear map between finite dimensional normed spaces is continuous, it follows that $\phi_n$ is continuous and has continuous inverse. Similarly, we can identify $E_m$ with $\mathbb R^m$ through a map $\phi_m$ which is continuous and has continuous inverse. 
		The map $T$ induces a continuous map $T_{n,m} : E_n \longrightarrow E_m$ which is obtained as $T_{n,m} = P_mT_{|E_n}$. Corresponding to such $T_{n,m}$, we introduce $F_{n,m}: \phi_n(E_n) \longrightarrow \mathbb R^m$ by filling the commutative diagram
		\begin{center}
			\begin{tikzcd}
				E_n\arrow[rr,"T_{n,m}"] & & E_m\arrow[d,"\phi_m"]\\
				 \mathbb R^n\arrow[u,"\phi_n^{-1}"]\arrow[rr,dashed,"F_{n,m}"] & & \mathbb R^m
			\end{tikzcd}.
		\end{center}
		In other words, we set $F_{n,m} := \phi_mT_{n,m}\phi_n^{-1} :  \mathbb R^n\longrightarrow \mathbb R^m$.
		The function $F_{n,m}$ is continuous, since it is a composition of continuous functions. As such, using the universal approximation properties of neural networks \cite{Horn,Pink,Fun}, we can find a neural network $f_{n,m} : \mathbb R^n \longrightarrow \mathbb R^m$ such that 
		\begin{eqnarray}\label{ineq:NN_approx}
		\|F_{n,m}(x) - f_{n,m}(x)\| < \frac{\epsilon}{3\|\phi_m^{-1}\|},
		\end{eqnarray}
		for all $x$ in the compact $\phi_nP_n(K) \subset \phi_n(E_n\cap U)$. 
		Finally, for all $x\in K$ we have
		\begin{eqnarray*}
			\|T(x) - (\phi_m^{-1}f_{n,m}\phi_n)P_n(x)\| &\leq& 
			\|T(x) - TP_n(x)\| + \|TP_n(x)- P_mTP_n(x)\| \\
			&& + \|P_mTP_n(x) -  (\phi_m^{-1}f_{n,m}\phi_n)P_n(x)\|\\
			&<&  \frac{\epsilon}{3} + \frac{\epsilon}{3} + \frac{\epsilon}{3}\\
			&=& \epsilon,
		\end{eqnarray*}
		where each term is seen to be smaller than $\frac{\epsilon}{3}$ as follows. We see that $\|T(x) - TP_n(x)\| < \frac{\epsilon}{3}$ holds because of \eqref{ineq:uniform}, the definition of uniform continuity, the fact that $P_n(x)\in U$ for each $x\in K$, and the choice of $\delta$.   
		We have  $\|TP_n(x)- P_mTP_n(x)\|< \frac{\epsilon}{3}$ because of \eqref{ineq:Ler-Sch} since $TP_n(x)\in L$ by definition. Moreover, $\|P_mTP_n(x) -  (\phi_m^{-1}f_{n,m}\phi_n)P_n(x)\| < \frac{\epsilon}{3}$ holds because of the choice of the neural network $f_{n,m}$, i.e. \eqref{ineq:NN_approx}, and the fact that $P_mT_{|E_n} = T_{n,m} = (\phi_m^{-1}F_{n,m}\phi_n)$ by construction. This completes the proof.
	\end{proof}

	It is possible to improve the preceding result by removing the assumption that $T$ be uniformly continuous in a neighborhood $U$ of $K$ as we now show. 

	\begin{theorem}\label{thm:Universal}
		Let $X$ and $Y$ be Banach spaces, let $T: X\longrightarrow Y$ be a continuous (possibly nonlinear) map, and let $K\subset X$ be a compact subset. Then, for any choice of $\epsilon > 0$ there exist natural numbers $n,m\in \mathbb N$, finite dimensional subspaces $E_n \subset X$ and $E_m \subset Y$, a continuous map $P_n : K\longrightarrow E_n$ (defined and continuous on a neighborhood of $K$), and a neural network $f_{n,m} : \mathbb R^n \longrightarrow \mathbb R^m$ such that for every $x\in K$
	\begin{eqnarray}
		||T(x) - \phi_m^{-1}f_{n,m}\phi_nP_n(x)|| < \epsilon,
		\end{eqnarray}
		where $\phi_k : E_k \longrightarrow \mathbb R^k$ indicates an isomorphism between the finite dimensional space $E_k$ and $\mathbb R^k$.  
	\end{theorem}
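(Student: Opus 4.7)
The plan is to chain three approximations: a Leray--Schauder projection in $X$, a Leray--Schauder projection in $Y$, and a finite dimensional universal approximation by a neural network. To control every error simultaneously, I would first invoke Mazur's theorem to pass to the compact set $K_c := \overline{\mathrm{conv}}(K) \subset X$, on which $T$ is uniformly continuous; so I can choose $\delta > 0$ with $x, x' \in K_c$ and $\|x - x'\| < \delta$ forcing $\|T(x) - T(x')\| < \epsilon/3$.

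Next I would pick a finite $\delta$-net $\{x_1, \ldots, x_n\} \subset K$ and set $E_n := \mathrm{span}(x_1, \ldots, x_n)$. The Leray--Schauder projection
\begin{equation*}
P_n(x) := \frac{\sum_{i=1}^n \mu_i(x)\, x_i}{\sum_{i=1}^n \mu_i(x)}, \qquad \mu_i(x) := \max\bigl(0,\, \delta - \|x - x_i\|\bigr),
\end{equation*}
is well defined and continuous on the $\delta$-neighborhood of $K$ and extends continuously to all of $X$ with values in $E_n$ by a Tietze-type retraction. Since $P_n(x)$ is a convex combination of points of $K$, it lies in $K_c$, and $\|P_n(x) - x\| < \delta$ for every $x \in K$, so the uniform continuity estimate gives $\|T(x) - T(P_n(x))\| < \epsilon/3$. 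I would then repeat the construction in $Y$ with the compact target $T(K_c)$: pick an $\epsilon/3$-net $\{y_1, \ldots, y_m\} \subset T(K_c)$, set $E_m := \mathrm{span}(y_1, \ldots, y_m)$, and build a Leray--Schauder map $P_m : Y \to E_m$ with $\|P_m(y) - y\| < \epsilon/3$ for every $y \in T(K_c)$. Since $T(P_n(x)) \in T(K_c)$, this gives $\|T(P_n(x)) - P_m T(P_n(x))\| < \epsilon/3$ for $x \in K$.

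To finish, I would fix linear isomorphisms $\phi_n : E_n \to \mathbb{R}^n$ and $\phi_m : E_m \to \mathbb{R}^m$, endowing $\mathbb{R}^k$ with the pullback norm so that $\phi_k$ is isometric. The map $g := \phi_m \circ P_m \circ T \circ \phi_n^{-1}$ is continuous from $\mathbb{R}^n$ to $\mathbb{R}^m$ and $\phi_n(P_n(K)) \subset \mathbb{R}^n$ is compact, so by the classical Cybenko/Hornik universal approximation theorem (applied coordinatewise) there exists a neural network $f_{n,m} : \mathbb{R}^n \to \mathbb{R}^m$ with $\|f_{n,m}(\xi) - g(\xi)\| < \epsilon/3$ on that compact set. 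A triangle inequality on the three estimates, with the isometric $\phi_m$ translating the last bound back to $E_m$, then yields $\|T(x) - \phi_m^{-1} f_{n,m} \phi_n P_n(x)\| < \epsilon$ for every $x \in K$.

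The main technical obstacle is making sure that all intermediate points stay inside a common compact set where both the uniform continuity modulus of $T$ and the small-error bound of $P_m$ are valid; the naive domain $P_n(K)$ and image $T(P_n(K))$ do not obviously sit inside $K$ and $T(K)$ respectively, so without care the chained estimates could fail. Mazur's compactness of $\overline{\mathrm{conv}}(K)$ resolves this automatically, since Leray--Schauder values are convex combinations of points of $K$, and the rest of the argument is bookkeeping built on top of the classical neural-network universal approximation theorem.
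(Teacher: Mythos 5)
Your proposal is correct and follows essentially the same route as the paper: Leray--Schauder projections on both sides, a finite-dimensional universal approximation of $\phi_m \circ P_m \circ T \circ \phi_n^{-1}$ on the compact $\phi_n(P_n(K))$, and a three-term triangle inequality. The one genuine addition is your appeal to Mazur's theorem to work on $\overline{\mathrm{conv}}(K)$, which cleanly justifies applying the uniform-continuity estimate to the pair $(x, P_n(x))$ even though $P_n(x)$ is only a convex combination of net points and need not lie in $K$ itself --- a point the paper's proof passes over silently --- and likewise places $T(P_n(K))$ inside a compact set on which the bound for $P_m$ is guaranteed to hold.
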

	\begin{proof}
				Let $\psi$ be an approximation of $T$ on the compact $K$ which is uniformly continuous on an open neighborhood $U$ of $K$, such that $\|T(x) - \psi(x)\| < \frac{\epsilon}{2}$ for all $x\in K$. This can be achieved as follows, for example. Let $\{B(x_i,\delta)\}_{i=1}^n$ be a finite cover of $K$ where $\delta$ is such that $\|T(y_1) - T(y_2)\| < \epsilon$ whenever $\|y_1-y_2\| < \delta$, whose existence is guaranteed by the fact that $T$ is uniformly continuous on $K$ by the Heine-Cantor theorem. It is known that given an open cover of a metric space, there exists a partition of unity of locally Lipschitz functions subordinate to the open cover \cite{luukkainen1976elements}. Applying this fact on the metric space $X$, with cover $V = \{B(x_i,\delta)\}_{i=1}^m\cup (X-K)$, we find a locally Lipschitz partition of unity $\{\psi_i\}_{i=1}^{n+1}$ subordinate to $V$. The map $\psi$ can be obtained by setting 
				\begin{eqnarray*}
						\psi(x) := \sum_{i=1}^n \psi_i(x)T(x_i) + \psi_{n+1}(x)y^*, 
				\end{eqnarray*}
				where $y^*$ is an arbitrarily fixed element of $Y$. 
				By construction, $\psi$ approximates $T$ on $K$ as required, and it is uniformly continuous on  a neighborhood $U$ of $K$ because it is locally Lipschitz and $K$ is compact. 
				Then, we can approximate $\psi$ on $K$ by $\phi_m^{-1}f_{n,m}\phi_nP_n$ as in Theorem~\ref{thm:Universal_uniform} with accuracy $\frac{\epsilon}{2}$. It follows that for all $x\in K$,  $\|T(x) - \phi_m^{-1}f_{n,m}\phi_nP_n(x)\| \leq  \|T(x) - \psi(x)\|+\|\psi(x) - \phi_m^{-1}f_{n,m}\phi_nP_n(x)\| < \epsilon$, as required. 
	\end{proof}
	
	\begin{corollary}
		With the assumption of Theorem~\ref{thm:Universal}, the neural network $f_{n,m}$ can be chosen to have a single hidden layer. 
	\end{corollary}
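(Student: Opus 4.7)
The plan is to trace through the proof of Theorem~\ref{thm:Universal} and observe that the depth of the network $f_{n,m}$ is only constrained at one step, namely the application of the universal approximation property in inequality~\eqref{ineq:NN_approx}. Everything preceding that step, the construction of the $\delta$-net in $K$, the Leray-Schauder projections $P_n$ and $P_m$, the isomorphisms $\phi_n$, $\phi_m$, and the induced map $F_{n,m} = \phi_m T_{n,m} \phi_n^{-1}$, is independent of how we later approximate $F_{n,m}$ by a neural network. The final error estimate only uses that $f_{n,m}$ approximates $F_{n,m}$ uniformly on the compact set $\phi_n P_n(K) \subset \mathbb{R}^n$ to within $\varepsilon/(3\|\phi_m^{-1}\|)$; it places no restriction on the architecture of $f_{n,m}$.

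My first step is therefore to invoke one of the classical single-hidden-layer universal approximation theorems, for instance the theorem of Pinkus \cite{Pink} (or Hornik \cite{Horn}, Funahashi \cite{Fun}), which asserts that single-hidden-layer feedforward networks with a continuous non-polynomial activation are dense in $C(Q, \mathbb{R}^m)$ for every compact $Q \subset \mathbb{R}^n$. Since $F_{n,m}$ is continuous and $\phi_n P_n(K)$ is compact (as the continuous image of a compact set), we can realize the uniform approximation in \eqref{ineq:NN_approx} using a single-hidden-layer network.

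The second step is to substitute this single-hidden-layer $f_{n,m}$ into the concluding triangle inequality of Theorem~\ref{thm:Universal} verbatim: each of the three error terms $\|T(x)-TP_n(x)\|$, $\|TP_n(x)-P_m T P_n(x)\|$, and $\|P_m T P_n(x) - (\phi_m^{-1} f_{n,m}\phi_n)P_n(x)\|$ is bounded by $\varepsilon/3$ for exactly the same reasons as before, giving the desired estimate.

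There is no substantive obstacle here; the corollary is essentially a bookkeeping remark noting that the cited universal approximation results already provide single-hidden-layer networks, so no additional argument beyond choosing the right reference is needed.
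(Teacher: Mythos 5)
Your proposal is correct and follows exactly the paper's argument: the only place the architecture of $f_{n,m}$ enters the proof of Theorem~\ref{thm:Universal} is the uniform approximation of the continuous map $F_{n,m}$ on the compact set $\phi_nP_n(K)$, and the classical single-hidden-layer density results \cite{Horn,Pink,Fun} already supply such a network. No further comment is needed.
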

	\begin{proof}
		This fact follows from the proof of Theorem~\ref{thm:Universal_uniform}, with the observation that $f_{n,m}$ needs to approximate the function $F_{n,m}$ on a compact. Then, from the theory of neural network approximation, see \cite{Horn}, we can choose $f_{n,m}$ with a single hidden layer with the necessary properties. 
	\end{proof}

	We observe that Theorem~\ref{thm:Universal} holds for general Banach spaces $X$ and $Y$, whereas the universal approximation of \cite{chen}, which is the theoretical basis for \cite{DeepOnet}, holds for spaces of functions $Y$ with the uniform norm over compacts. The present result, therefore, substantially differs from that of \cite{chen}. 
	
	The Leray-Schauder projections present an implementational problem present in the theoretical framework of Theorem~\ref{thm:Universal}. Namely, we do not know how to choose the points used to obtain the spaces $E_n$ for the (nonlinear) projections. In a general Banach space, this might not be a trivial issue. In the rest of this article, we will develop a framework based on orthogonal multivariate polynomials to address this issue in the case of Banach spaces of functions $L^p_\mu$, and the Hilbert space $L^2_\mu$, with some measure $\mu$. An implementation of the Leray-Schauder mappings for deep learning has been given in \cite{leray_schauder}, based on the theoretical results of the present article.

	\section{Learning Linear Projections on Banach Spaces of functions}\label{sec:L_p}
	
	In this section we assume to work on the Banach space $L^p_\mu(S)$ where $\mu$ is some fixed finite Borel measure, and $S$ is a $\mu$-measurable subset of $\mathbb R^d$, which we will assume to be compact throughout. We will also assume that the measure is normalized, i.e. $\mu(S) = 1$, for simplicity since the reasoning can be adjusted to the case where $\mu(S) \neq 1$. A typical example would be the $L^p$ space on $[0,1]^{\times d}$ with Lebesgue measure $\mu$. In this article, by $L^p$ space we always mean the case where $1< p < +\infty$. 
	
	Let $\rho: S \longrightarrow \mathbb R$ be a $\mu$-integrable function in $L^q_\mu(S)$, where $\frac{1}{p} + \frac{1}{q} = 1$. Then, we say that $\rho$ is a weight function for $L^p_\mu(S)$. Let $\{p_k\}_{k=0}^r$ be a class of polynomials, then we say that they are orthogonal with respect to $\rho$ if 
	\begin{eqnarray*}
		\int_S p_kp_\ell \rho d\mu = \begin{cases}
			0 \hspace{2cm} k\neq \ell \\
			\neq 0 \hspace{1.6cm} k=\ell
		\end{cases}.
	\end{eqnarray*} 
	If the polynomials $p_k$ are normalized to $1$, i.e. $\int_S p_k^2 \rho d\mu = 1$, we will say that they are orthonormal.
	The weight function $\rho$, along with a class of orthogonal polynomials $\{p_k\}_{k=0}^r$ defines a projection on the subspace $E_r$ spanned by the polynomials through the functional defined according to the assignment
	\begin{eqnarray*}
		\mathcal L(f) = \int_S f(\mathbf x)\rho(\mathbf x) d\mu.
	\end{eqnarray*}
	The functional $\mathcal L$ defines a quasi-inner product.
	The projection on $E_r$ is then explicitly given by $P_r(f) = \sum_k \mathcal L(fp_k)\frac{p_k}{\mathcal L(p_k^2)}$. If the functional is positive (i.e. if $\rho$ has non-negative values), then we can normalize the polynomials by $\sqrt{\mathcal L(p_k^2)}$, and simply define $P_r(f) = \sum_k \mathcal L(fp_k)p_k$, where each $p_k$ is normalized to $1$. The maps $P_n$ are directly seen to be continuous. More generally, when $\mathcal L$ is an arbitrary functional such that 
	\begin{eqnarray*}
		\mathcal L(p_kp_\ell) = \begin{cases}
			0 \hspace{2cm} k\neq \ell \\
			\neq 0 \hspace{1.6cm} k=\ell
		\end{cases},
	\end{eqnarray*} 
	we will still say that $p_k$ are orthogonal (or orthonormal) polynomials with respect to $\mathcal L$, and in this case we can define a projection $P_n$ as before for all $n$.
	
	\begin{definition}\label{def:neural_proj}
		{\rm 
			A {\it neural projection operator} $\mathfrak S_{n,m,r}$ is a quadruple 
			$$(F_{n,m}, \rho_1, \rho_2, \{p^1_k\}_{k=0}^{r_1}, \{p^2_k\}_{k=0}^{r_2}),$$ 
			where $F_{n,m} : \mathbb R^n \longrightarrow \mathbb R^m$ is a neural network, $\rho_i: \mathbb R^d \longrightarrow \mathbb R$ are neural network weight functions, and $\{p^i_k\}_{k=0}^{r_i}$ is a set of orthogonal polynomials with respect to $\rho_i$.
		}
	\end{definition}
	This class of deep learning models consists of algorithms with a learnable projection over a multivariate orthogonal polynomial basis, along with a neural network that models a projected operator. We want to show now that neural projection operators are universal approximators of continuous (possibly nonlinear) operators in $L^p$ spaces. 
	
	The work of Kowalski \cite{Kow1,Kow2} and Xu \cite{Xu1,Xu2} has characterized the algebraic properties that the polynomials $\{p_k\}_{k=0}^\infty$ need to satisfy to be an orthogonal basis for the space of polynomials with respect to some functional $\mathcal L$. They have also shown that there are bases of such polynomials. In \cite{Kow1}, Kowalski has also given a condition for which the functional $\mathcal L$ is continuous on the space of polynomials in $\|\cdot\|_2$ norm. We assume that  $\{p_k\}_{k=0}^\infty$ is orthogonal with respect to a functional $\mathcal L$, continuous in $\|\cdot \|_p$ norm, and such that the projections $P_n$ are uniformly bounded in $n$. Also, we assume that no polynomial vanishes $\mu$-a.e. on $S$, e.g. as it happens if $S$ has nontrivial interior and $\mu$ has support containing an open subset of $S$. We want to show that given a basis and a continuous functional, we can approximate any continuous operator between $L^p$ spaces with a neural projection operator. 
	
	\begin{theorem}\label{thm:linear_universal}
		Let $T : L^{p_1}_\mu(S) \longrightarrow L^{p_2}_\mu(S)$ be a continuous (possibly nonlinear) operator, and let $X$ be a compact subset of $L^{p_1}_\mu(S)$. Let $\{p^i_k\}_{k=0}^\infty$ and $\mathcal L^i$ be as above, for $i=1,2$. Then, for any choice of $\epsilon >0$,  we can find a neural projection operator  $\mathfrak S_{n,m,r}$ such that 
		\begin{eqnarray}\label{ineq:linear_approx}
		||T(f) - \phi_m^{-1}f_{n,m}\phi_n\hat P_n(f)||_{p_2} < \epsilon,
		\end{eqnarray}
		for all $f\in X$, where $\phi_i$ are isomorphisms as before, and $\hat P_n$ is a learned continuous linear map.
	\end{theorem}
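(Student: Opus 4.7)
The strategy is to run the argument of Theorem~\ref{thm:Universal} verbatim, replacing the nonlinear Leray-Schauder projections by the linear projections on polynomial subspaces defined in this section. Writing $E_n = \mathrm{span}\{p^1_0,\ldots,p^1_n\}$ and $E_m = \mathrm{span}\{p^2_0,\ldots,p^2_m\}$, I set
\begin{eqnarray*}
\hat P_n(f) = \sum_{k=0}^n \frac{\mathcal L^1(fp^1_k)}{\mathcal L^1((p^1_k)^2)}\, p^1_k, \qquad \hat P_m(g) = \sum_{k=0}^m \frac{\mathcal L^2(gp^2_k)}{\mathcal L^2((p^2_k)^2)}\, p^2_k.
\end{eqnarray*}
By the standing assumption that each $\mathcal L^i$ is continuous in $\|\cdot\|_{p_i}$, together with the fact that multiplication by $p^i_k\in L^\infty(S)$ is a bounded operator on $L^{p_i}_\mu(S)$ (here compactness of $S$ is essential), the coefficient functionals $f\mapsto \mathcal L^i(fp^i_k)$ are continuous, so $\hat P_n$ and $\hat P_m$ are bounded linear operators. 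Orthogonality of $\{p^i_k\}$ with respect to $\mathcal L^i$ gives $\hat P_n q = q$ for every $q\in E_n$, so each $\hat P_n$ is a genuine projection onto $E_n$.

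Given $\epsilon>0$, the first step would be to extract $\delta>0$ from the uniform continuity of $T$ on the compact set $X$, exactly as in Theorem~\ref{thm:Universal}. The essential task is then to choose $n$ so that $\|f-\hat P_n(f)\|_{p_1}<\delta$ holds simultaneously for every $f\in X$. Using the projection identity, for any $q\in E_n$ one has $\|f-\hat P_n f\|_{p_1} = \|(I-\hat P_n)(f-q)\|_{p_1} \leq (1+\|\hat P_n\|)\|f-q\|_{p_1}$. Polynomials are dense in $L^{p_1}_\mu(S)$ (compactness of $S$ lets one combine Stone-Weierstrass with the density of $C(S)$ in $L^{p_1}_\mu$), and a finite $\eta$-net for the compact $X$ turns pointwise polynomial approximation into a uniform bound over $f\in X$, \emph{provided} that $\sup_n\|\hat P_n\|<\infty$. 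Applying the identical argument on the target side to the compact set $L = T\hat P_n(X)\subset L^{p_2}_\mu(S)$ yields $m$ with $\|g-\hat P_m(g)\|_{p_2}<\epsilon/3$ for every $g\in L$. The final neural network step is routine: $F_{n,m}:=\phi_m\hat P_m T_{|E_n}\phi_n^{-1}\colon\mathbb R^n\to\mathbb R^m$ is continuous and approximated on the compact image $\phi_n\hat P_n(X)$ by $f_{n,m}$ within $\epsilon/(3\|\phi_m^{-1}\|)$ via the universal approximation theorem, and the same three-term triangle inequality used in Theorem~\ref{thm:Universal} delivers \eqref{ineq:linear_approx}.

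The main obstacle is the uniform operator-norm bound $\sup_n\|\hat P_n\|<\infty$, without which density of polynomials does not suffice to produce uniform convergence on the compact $X$. This is precisely the kind of ``additional assumption'' flagged in the introduction: in the Hilbert space setting $p=2$ with orthonormal systems, orthogonal projections have norm one and the bound is free, which is why Section~\ref{sec:L_2} is able to give clean sufficient conditions. For general $p\neq 2$, uniform boundedness of polynomial partial sum projections is a genuine hypothesis on the pair $(\{p^i_k\},\mathcal L^i)$ that must be imposed alongside continuity of $\mathcal L^i$.
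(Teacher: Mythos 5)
Your high-level plan is indeed the paper's: swap the Leray--Schauder maps of Theorem~\ref{thm:Universal} for linear polynomial projections and rerun the three-term triangle inequality. But as a proof of the stated theorem there is a concrete omission: the theorem asks for a \emph{neural projection operator} in the sense of Definition~\ref{def:neural_proj}, whose weight functions are themselves neural networks, and it calls $\hat P_n$ a \emph{learned} map. Your $\hat P_n$ is built from the exact functional $\mathcal L^1$, so nothing is learned. The paper's proof contains an entire layer you skip: it invokes the Riesz representation theorem to write $\mathcal L^1(f)=\int f\rho_1\,d\mu$ with $\rho_1\in L^{q_1}_\mu(S)$, approximates $\rho_1$ uniformly on the compact $S$ by a neural network $\hat\rho_1$ with $\|\hat\rho_1-\rho_1\|_{q_1}<\epsilon/(9MN)$ (where $M$ bounds $\|f\|_{p_1}$ on $X$ and $N=\sum_{k=1}^n\|p_k\|_\infty\|p_k\|_{p_1}/|\mathcal L(p_k^2)|$), defines $\hat P_n(f)=\sum_k\bigl(\int fp_k\hat\rho_1\,d\mu\bigr)\,p_k/\mathcal L(p_k^2)$, and absorbs the extra term $\|\hat P_n(f)-P_n(f)\|$ into the $\epsilon/3$ budget. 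Without this step you have not produced the object whose existence the theorem asserts.

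On the uniform-boundedness issue you flag: the paper takes a route that avoids quantifying over all $n$. It chooses the $\epsilon'$-net of $X$ to consist of \emph{polynomials} $f_1,\dots,f_d$, then takes $n$ large enough that every $f_r$ lies in $E_n$, so $P_nf_r=f_r$ exactly and the decomposition $\|P_n(f)-f\|\le\|\hat P_n(f)-P_n(f)\|+\|P_n(f-f_r)\|+\|f_r-f\|$ involves the norm of only the single, fixed projection $P_n$; no supremum over $n$ appears. That said, your instinct is not baseless: the paper bounds $\|P_n(f-f_r)\|$ by $\|\rho_1\|_{q_1}\|f-f_r\|$ with $\epsilon'$ fixed \emph{before} $n$ is determined, which implicitly uses $\|P_n\|\le\|\rho_1\|_{q_1}$ independently of $n$, whereas the crude estimate only yields $\|P_n\|\le\|\rho_1\|_{q_1}N$; and a condition of exactly the type you propose, $\sum_k\|p_k\|_\infty\|p_k\|_p/|G_k(p_k)|<\infty$, is what the paper itself imposes in Hypothesis~\ref{hyp:Galerkin} when uniform boundedness is genuinely required in Section~\ref{sec:Fixed_Point}. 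So your diagnosis of the analytic crux is sharp, but your write-up resolves it by postulating a hypothesis the statement does not grant rather than by the fixed-$n$ device, and it is missing the neural approximation of the weight function; on both counts it falls short of a proof of the theorem as stated.
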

	\begin{proof}
		As in the case of Theorem~\ref{thm:Universal}, we can split the proof in two steps. One where we approximate a mapping which is uniformly continuous on a neighborhood $U$ of $X$, and then approximate the original map by such uniformly continuous map on $U$, using a locally Lipschitz partition of unity. Since the second step is fixed, and does not depend on the current setup, we can focus on the case where $T$ is uniformly continuous on $U \supset X$. 
		
		Given the fact that $\mathcal L$ is continuous by assumption on the space of polynomials in the $\|\cdot \|_p$ norm, where $p$ is either $p_1$ or $p_2$,  we can extend $\mathcal L$ to the whole $L^p_\mu(S)$ continuously by the continuous linear extension theorem, using the density of polynomials in $L^p_\mu(S)$. In fact, since $S$ is compact and $\mu$ is a finite measure, the space of continuous functions is dense in $L^p(S,\mu)$, and by the Stone-Weierstrass theorem, we have that polynomials are uniformly dense in the continuous functions. 
		Using the Riesz representation theorem for $L^p$ spaces, we can find a function $\rho$ in $L^q_\mu(S)$, where $\frac{1}{p} + \frac{1}{q} = 1$, such that $\mathcal L(f) = \int f\rho d\mu$ for all $f\in L^p_\mu(S)$. Moreover, $\|\mathcal L\| = \|\rho\|_q$. For any fixed number of polynomials $p_{i_1}, p_{i_2}, \ldots, p_{i_n}$ we have a projection $P_n : L^p_\mu(S) \longrightarrow E_n$ defined as $P_n(f) = \sum_{k=1}^n \mathcal L(fp_{i_k})\frac{p_{i_k}}{\mathcal L(p_{i_k}^2)}$, where $E_n$ indicates the span of the polynomials. 
		The projections $P^1_n$ and $P^2_n$, corresponding to the polynomials $\{p^1_i\}$ and $\{p^2_j\}$, respectively, will be shortened to $P_n$ for simplicity. We will denote by $\rho_i$ ($i=1,2$) the corresponding weight functions. For notational simplicity, $\|\rho_i\|$ will indicate $\|\rho_i\|_{q_i}$, $i=1,2$.
		
		Let $\delta >0$ be fixed. For simplicity, we just write $\|\cdot \|_p$ for the $p_1$-norms. We choose a finite set of functions that approximate any element of $X$ with accuracy $\epsilon'$, where $\epsilon'$ is sufficiently small so that $\epsilon'\|P_n\| < \frac{\delta}{3}$ for each $n$, consisting of polynomials $f_1, \ldots, f_d$, due to compactness of $X$ and density of polynomials in $L^p_\mu(S)$. We can then find $\ell$ polynomials $p_{i_1}, \ldots, p_{i_\ell}$ of $\{p_k\}_{k=0}^\infty$ such that $f_1, \ldots, f_d$ are in the linear span of $p_{i_1}, \ldots, p_{i_\ell}$. Upon possibly choosing a smaller $\delta$, we can also assume that $B(f,\delta) \subset U$ for each $f\in X$. Let $E_n$ be the span of $p_0, \ldots, p_n$ where $n> \max\{i_1, \ldots, i_\ell\}$. We let $P^1_n: L^{p_1}_\mu(S) \longrightarrow E_n$ denote the projection discussed above. Let now $M>0$ be larger than $\|f\|_p$ for all $f\in X$ (since $X$ is bounded such $M$ exists), and define $N = \sum_{k=1}^n \frac{\|p_k\|_\infty \|p_k\|_p}{|\mathcal L(p_k^2)|}$. Using the density of polynomials in $L^{q_1}_\mu(S)$, we can find a polynomial $\zeta_1$ such that $\|\zeta_1 - \rho_1\|_{q_1} < \frac{\delta}{6MN}$. Using the results on universal approximation of continuous functions by means of neural networks \cite{Pink,Fun,Horn}, we can find a neural network $\hat \rho_1$ such that $\|\hat \rho_1 - \zeta_1\|_\infty < \frac{\delta}{6MN}$ on the compact $S$. Therefore, we also have that $\|\hat \rho_1 - \rho_1\|_{q_1} < \frac{\delta}{3MN}$. Let us define the linear map $\hat P_n: L^{p_1}_\mu(S) \longrightarrow E_n$, defined by $\hat P_n(f) = \sum_{k=1}^n \int fp_k\hat \rho_1 d\mu \cdot \frac{p_k}{\mathcal L(p_k^2)}$. 
		We want to show that for all $f$ in $X$, we have $\|\hat P_n(f) - f\|_p < \delta$, and therefore that in particular $\hat P_n(X) \subset U$ . By construction, if $f\in X$, we can find a polynomial $f_r$, for some $r$ in $\{1, \ldots , d\}$, such that $\|f-f_r\|_p < \epsilon'$. Then, we have
		\begin{eqnarray*}
			\|\hat P_n(f) - f\|_p
			&\leq& \|\hat P_n(f) - P_n(f)\|_p + \|P_n(f) - P_n(f_r)\|_p + \| P_n(f_r) - f\|_p\\
			&=& \|\hat P_n(f) - P_n(f)\|_p + \|P_n(f-f_r)\|_p + \|f_r - f\|_p\\
			&\leq&  \sum_{k=1}^n|\int fp_k(\hat \rho-\rho)d\mu| \frac{\|p_k\|_p}{|\mathcal L(p_k^2)|} + \|P_n\| \|f - f_r\|_p + \epsilon'\\
			&\leq&  \sum_{k=1}^n \|f\|_p \|\hat \rho_1 - \rho_1\|_{q_1}\|p_k\|_\infty\frac{\|p_k\|_p}{|\mathcal L(p_k^2)|}  + \|P_n\|\|f - f_r\|_p + \epsilon'\\
			&<& \frac{\delta}{3} + \frac{\delta}{3} + \frac{\delta}{3}\\
			&=& \delta, 
		\end{eqnarray*}
		where we have used the fact that the functional $\psi(f) = \int f(\hat\rho - \rho)d\mu$ is continuous on $L^p$, and therefore $|\int f(\hat\rho - \rho)d\mu| \leq \|\hat\rho - \rho\|_q \|f\|_p$, along with the fact that $\|fp_k\|_p \leq \|f\|_p\|p_k\|_\infty$ for each $k$. 
		Similarly, one can construct $\hat P^2_m$ which maps onto an $m$-dimensional space $E_m$ such that $\|\hat P^2_m(f) - f\|_{p_2} < \frac{\epsilon}{3}$ whenever $f$ is in the compact  $Y = T(X)$. We can therefore now repeat the same construction of the proof of Theorem~\ref{thm:Universal}, using the proof of Theorem~\ref{thm:Universal_uniform}, where we use $\hat P^1_n$ and $\hat P^2_m$ instead of the Leray-Schauder (nonlinear) maps. 
		We will not repeat the details, as they are virtually identical to the combination of the proofs of Theorem~\ref{thm:Universal_uniform} and Theorem~\ref{thm:Universal}, but just observe that the proof did not need the fact that $x_i$ were chosen in the compact, but only that the Leray-Schauder maps sent the compact inside $U$, which was ensured in the present construction as well. 
		This will give us a neural network $f_{n,m}$ such that \eqref{ineq:linear_approx} is satisfied, therefore completing the proof.
	\end{proof}

	If $\sum_{k=0}^n\frac{\|p_k\|_\infty\|p_k\|_p}{|\mathcal L(p_k^2)|}$ is uniformly bounded in $n$, it would follow that the $P_n$ are uniformly bounded, since 
	\begin{eqnarray*}
		\|P_n(g)\|
		&=& \|\sum_{k=0}^n \int gp_k\rho d\mu \cdot \frac{p_k}{\mathcal L(p^2_k)}\|\\
		&\leq& \sum_{k=0}^n\int |gp_k\rho| d\mu \cdot \frac{\| p_k\|}{|\mathcal L(p^2_k)|}\\
		&\leq& \|g\|_p \|\rho\|_q \sum_{k=0}^n\|p_k\|_\infty  \frac{\| p_k\|}{|\mathcal L(p^2_k)|}.
	\end{eqnarray*}
	This is therefore an explicit bound that can be imposed in the training algorithm, if the polynomials $p_k$ are learned as well. 
	
	\begin{remark}
		{\rm 
			Theorem~\ref{thm:linear_universal} has the fundamental assumption that the functionals $\mathcal L^i$ determined by the polynomials $\{p^i_k\}_{k=0}^\infty$ are continuous in the $p$-norm on the space of polynomials. We will consider the important case $p=2$ where, applying results of Kowalski in \cite{Kow1}, we can have sufficient conditions for this to happen. In particular, ensuring such conditions in a deep learning algorithm would allow the approximation result of Theorem~\ref{thm:linear_universal} to be applicable.  
		}
	\end{remark}
	
	\section{Learning Linear Projections on the Hilbert space}\label{sec:L_2}
	
	We now consider the particularly important example of the Hilbert space $L^2([-1,1]^n)$, which is of great importance in applications, including deep learning. In fact, the loss function used in deep learning problems is often the mean squared error (MSE), which corresponds to a discretized version of $L^2$ norm. 
	
	 We recall the following condition considered by Kowalski in \cite{Kow1}. 
	
	\begin{hypothesis}[Kowalski]\label{hyp:Kow}
		For each $k = 0,1, \ldots$ there exist matrices $A_k, B_k, C_k$ such that 
		\begin{eqnarray*}
			{\rm rank} A_k = r_n^{k+1},
		\end{eqnarray*}
		where $r_n^{k+1}$ is the number of degree $k+1$ polynomials in $\{p_i\}$.  The recursion formula
		\begin{eqnarray*}
			\vec{xp_i} = A_i\vec p_i + B_i\vec p_i + C_i\vec p_{i-1}
		\end{eqnarray*}
		holds, where the symbol $\vec p_i$ represents vectors consisting of all polynomials in the family $\{p_k\}$ of degree $i$. For any arbitrary sequence of matrices such that $D_kA_k = \mathbb 1$ the recursion
		\begin{eqnarray*}
			I_0,\ \ \ I_{j+1} = D_j {\rm bp} (I_jC^T_{j+1}),
		\end{eqnarray*}
		gives positive definite matrices, where ${\rm bp}$ is the operation (defined in \cite{Kow1}) that performs block permutation. 
	\end{hypothesis}
	
	We have the following useful result, which is proved using the same approach of Theorem~2 in \cite{Kow1}.
	
	\begin{lemma}\label{lem:Kow}
		Let $\{p_k\}_{k=0}^\infty$ be a family of polynomials satisfying Hypothesis~\ref{hyp:Kow}. Let $\{q_k\}$ be an orthonormal polynomial complete sequence in $L^2([-1,1]^n)$, and suppose that 
		\begin{eqnarray}\label{eqn:p_q}
		q_k = \sum_{j=0}^\infty C_{kj}p_{j},
		\end{eqnarray}
		where $\sum_{k=0}^\infty C^2_{k1} < \infty$. Then, there exists a function $\rho \in L^2([-1,1]^n)$, and numbers $m_k \neq 0$, such that the functional $\mathcal L(f) := \int f\rho d\mu$ is continuous over $L^2([-1,1]^n)$ and it satisfies $\mathcal L(p_ip_k) = \delta_{ik}m_k$. 
	\end{lemma}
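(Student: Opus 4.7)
The plan is to follow the strategy of Theorem~2 in \cite{Kow1}, but to then convert the existence of the moment functional into $L^2$-continuity by exploiting the comparison \eqref{eqn:p_q} with the genuinely $L^2$-orthogonal system $\{q_k\}$. First, I will invoke Hypothesis~\ref{hyp:Kow} via the matrices $A_k,B_k,C_k,D_k$ and the positive definiteness of the $I_j$'s exactly as in Kowalski's argument to construct a linear functional $\mathcal L$ on the algebra of polynomials on $[-1,1]^n$ satisfying $\mathcal L(p_ip_k) = \delta_{ik}m_k$ for some numbers $m_k\neq 0$. Since the constant polynomial is in the family $\{p_k\}$ (as $p_1$ in Kowalski's indexing), the orthogonality relations immediately give $\mathcal L(p_j) = \mathcal L(p_1 p_j) = m_1\delta_{1j}$.

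The second step is the short key computation. Each $q_k$ is a polynomial, so the expansion \eqref{eqn:p_q} is in fact a finite sum, and one can apply $\mathcal L$ termwise to obtain
\begin{eqnarray*}
\mathcal L(q_k) \;=\; \sum_{j=0}^\infty C_{kj}\,\mathcal L(p_j) \;=\; C_{k1}\, m_1.
\end{eqnarray*}
The hypothesis $\sum_k C_{k1}^2 < \infty$ then yields $\sum_{k=0}^\infty \mathcal L(q_k)^2 = m_1^2\sum_k C_{k1}^2 < \infty$.

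The third step converts this into an $L^2$-bound on polynomials. After normalizing $\{q_k\}$ to be orthonormal, any polynomial $f$ has a finite expansion $f=\sum_k \alpha_k q_k$ with $\|f\|_2^2 = \sum_k \alpha_k^2$, and Cauchy--Schwarz gives
\begin{eqnarray*}
|\mathcal L(f)| \;=\; \Big|\sum_k \alpha_k\, \mathcal L(q_k)\Big| \;\leq\; \|f\|_2 \Big(\sum_k \mathcal L(q_k)^2\Big)^{1/2}.
\end{eqnarray*}
Thus $\mathcal L$ is bounded on the dense subspace of polynomials, and extends uniquely to a continuous linear functional on $L^2([-1,1]^n)$. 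The Riesz representation theorem then furnishes a unique $\rho \in L^2([-1,1]^n)$ such that $\mathcal L(f) = \int f\rho\, d\mu$ for all $f\in L^2$, and because the extension coincides with the original $\mathcal L$ on polynomials, the desired identities $\mathcal L(p_ip_k) = \delta_{ik}m_k$ remain valid against this $\rho$.

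The main obstacle is the first step, namely re-deriving the existence of the moment functional from the matrix recursion in Hypothesis~\ref{hyp:Kow}; this is nontrivial but essentially word-for-word the argument of Theorem~2 of \cite{Kow1} and can be imported. The genuinely new content of the lemma lies in the cheap but crucial observation that evaluation of $\mathcal L$ on the basis $\{q_k\}$ collapses to a single column of the transition matrix $C_{kj}$, so that the $\ell^2$-summability hypothesis on that column upgrades a purely algebraic moment functional to a bounded functional on the Hilbert space.
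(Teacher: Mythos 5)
Your proposal is correct and follows exactly the route the paper intends: the paper gives no written proof of this lemma beyond the remark that it ``is proved using the same approach of Theorem~2 in \cite{Kow1}'', and your argument is a faithful reconstruction of that approach --- import the moment functional $\mathcal L$ with $\mathcal L(p_ip_k)=\delta_{ik}m_k$ from Kowalski's construction, observe that $\mathcal L(q_k)$ collapses to the single column entry $C_{k1}m_1$ since the expansion \eqref{eqn:p_q} is finite for each $k$, use Cauchy--Schwarz in the orthonormalized $\{q_k\}$ basis to bound $\mathcal L$ on the dense subspace of polynomials, and conclude with Riesz representation. The only cosmetic point is the indexing of the constant polynomial ($p_0$ versus $p_1$), an ambiguity already present in the paper's statement, which you resolve sensibly by adopting Kowalski's convention.
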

	
	The following is a special case of Theorem~\ref{thm:linear_universal} when $p=2$, by applying Lemma~\ref{lem:Kow}. Here we are assuming the same uniform boundedness condition of Theorem~\ref{thm:linear_universal}. 
	\begin{theorem}\label{thm:linear_universal_L2}
		Let $\{p_k\}_{k=0}^\infty$ be a family of polynomials as in Lemma~\ref{lem:Kow}, let $T:  L^2([-1,1]^n) \longrightarrow L^2([-1,1]^n)$ be a continuous operator, and let $X\subset L^2([-1,1]^n)$ be compact. Then, for any choice of $\epsilon >0$,  we can find a neural projection operator  $\mathfrak S_{n,m,r}$ such that 
		\begin{eqnarray}\label{ineq:linear_approx_L2}
		||T(x) - \phi_m^{-1}f_{n,m}\phi_n\hat P_n(x)||_2 < \epsilon,
		\end{eqnarray}
		for all $x\in X$, where $\phi_i$ are isomorphisms as before, and $\hat P_n$ is a learned continuous linear map.
	\end{theorem}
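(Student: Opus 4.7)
The plan is to deduce this result as a direct specialization of Theorem~\ref{thm:linear_universal}, with Lemma~\ref{lem:Kow} supplying exactly the missing ingredient required by its hypotheses. Recall that Theorem~\ref{thm:linear_universal} needs, on both the source and target sides, a family of polynomials orthogonal with respect to some functional $\mathcal L$ that is continuous in the $\|\cdot\|_p$ norm. Here the source and target spaces coincide with $L^2([-1,1]^n)$ and we are free to use the same polynomial family $\{p_k\}$ on both sides, so only a single such functional is needed.

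First, I would apply Lemma~\ref{lem:Kow} to the family $\{p_k\}_{k=0}^\infty$ to obtain a weight $\rho \in L^2([-1,1]^n)$ and nonzero constants $m_k$ such that $\mathcal L(f) := \int f \rho\, d\mu$ is continuous on $L^2([-1,1]^n)$ and satisfies $\mathcal L(p_i p_k) = \delta_{ik} m_k$. In particular, $\{p_k\}$ is an orthogonal family with respect to $\mathcal L$, and by Cauchy--Schwarz continuity in the $\|\cdot\|_2$ norm reduces simply to $\rho$ being in $L^2$, which is exactly the $q=2$ case of the Riesz representative appearing in the proof of Theorem~\ref{thm:linear_universal}. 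Setting $\mathcal L^1 = \mathcal L^2 = \mathcal L$, $\rho_1 = \rho_2 = \rho$, and $\{p^1_k\} = \{p^2_k\} = \{p_k\}$, the full hypothesis of Theorem~\ref{thm:linear_universal} with $p_1 = p_2 = 2$ and $S = [-1,1]^n$ is verified.

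Next, I would invoke Theorem~\ref{thm:linear_universal} directly on the operator $T$, the compact set $X$, and the tolerance $\epsilon>0$. The theorem produces integers $n,m$, a neural projection operator $\mathfrak S_{n,m,r}$ consisting of a neural-network weight $\hat\rho$ approximating $\rho$ in the appropriate $L^2$ sense, the induced continuous linear projections $\hat P_n$ and $\hat P_m$ onto finite-dimensional polynomial subspaces, and a neural network $f_{n,m} : \mathbb R^n \to \mathbb R^m$ satisfying the bound \eqref{ineq:linear_approx}, which specializes verbatim to \eqref{ineq:linear_approx_L2}.

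The only mild subtlety, and the one to double-check, is that the functional produced by Lemma~\ref{lem:Kow} is precisely the one used as the starting point in the proof of Theorem~\ref{thm:linear_universal}: there, continuity of $\mathcal L$ on the dense subspace of polynomials is extended to all of $L^p$ and then represented via Riesz, yielding a weight in $L^q$. Since Lemma~\ref{lem:Kow} already delivers the representation $\mathcal L(f) = \int f \rho\, d\mu$ with $\rho \in L^2 = L^{q}$ for $q=2$, no further extension is required and the two descriptions of $\mathcal L$ agree on the dense polynomial subspace, hence everywhere by continuity. Apart from this bookkeeping, the proof is entirely a matter of invoking the two earlier results in sequence.
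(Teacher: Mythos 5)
Your proposal is correct and follows essentially the same route as the paper: the paper's proof is precisely the observation that Lemma~\ref{lem:Kow} supplies a functional $\mathcal L$ continuous on $L^2([-1,1]^n)$, after which Theorem~\ref{thm:linear_universal} applies verbatim with $p_1=p_2=2$. Your additional remark that the Riesz representative is already furnished by the lemma, so no extension step is needed, is a sound piece of bookkeeping consistent with the paper's argument.
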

	\begin{proof}
		The main observation is that Lemma~\ref{lem:Kow} allows us to use the same proof as in Theorem~\ref{thm:linear_universal} since the functional $\mathcal L$ is continuous on $L^2([-1,1]^n)$. 
	\end{proof}
	
	As a particular case of the previous result one can take any complete sequence of orthonormal polynomials $\{p_k\}_{k=0}^\infty$ and set $q_k = p_k$ for all $k$, since in this case $C_{kj} = \delta_{kj}$ in Equation~\eqref{eqn:p_q}, and $\sum C_{k1} = C_{11} < \infty$, and where $\rho = 1$, so that $\mathcal L(fg) = \int fgd\mu = \langle f, g\rangle $. In this case, the uniform boundedness of the projections $P_n$ is automatic.

	\section{Approximations for Fixed Points}\label{sec:Fixed_Point}
	
	We now consider the problem of solving an equation of type
	\begin{eqnarray}\label{eqn:fixed_point}
	T(x) + f = x,
	\end{eqnarray}
	where $f$ is a fixed element of the Banach space $X$, and $T: X\longrightarrow X$ is an operator which is possibly nonlinear. The element $x\in X$ satisfying Equation~\eqref{eqn:fixed_point} is a fixed point for the operator. Our interest in such a problem stems from the fact that it is possible to frame operator learning tasks in terms of fixed point problems as in \cite{ANIE_NAT,Spectral,NIDE}. Our fundamental question in this section is whether it is possible to project Equation~\eqref{eqn:fixed_point} to a finite dimensional space, and if taking the limit $n \longrightarrow \infty$ one recovers a solution to Equation~\eqref{eqn:fixed_point}. If this is the case, then we can formulate an operator learning problem in terms of a neural projection operator, as in Definition~\ref{def:neural_proj}, and in the limit $n\longrightarrow +\infty$ for the size of the projection space used in the neural operator we recover solutions of the original system. Therefore, we would have the guarantee that upon taking a high enough projection dimension $n$ the learned operator would approximate $T$ and the solution would give an approximation to the real fixed point of $T$ in Equation~\eqref{eqn:fixed_point}. 
	
	We use the same framework as in Section~\ref{sec:L_p}, and we assume that the function $\rho\in L^q_\mu(S)$ (with $\frac{1}{p} + \frac{1}{q} = 1$) is given, which implies that the projection $P_n$ on $E_n$ it induces from $L^p_\mu(S)$ is continuous for all $n$. We assume that $\{p_k\}$ is such that the uniform boundedness of $P_n$ is ensured. 
	
	\begin{hypothesis}\label{hyp:Galerkin}
		We make the following assumptions. 
		\begin{enumerate}
			\item 
			The operator $T : L^p_\mu(S) \longrightarrow L^p_\mu(S)$ is completely continuous. 
			\item 
			The topological index of $T+f$ is nonzero, in the sense that $\deg(\mathbb 1-f-T,U,0) \neq 0$ in some open neighborhood $U$ of zero where $x = f + T(x)$ admits a unique solution.  
			\item 
			The operator $T$ is Fr\'echet differentiable. 
			\item 
			The value $1$ is not an eigenvalue of the Fr\'echet derivative of $T$ at $0$.
		\end{enumerate}
	\end{hypothesis}

			\begin{remark}
					{\rm 
							We point out that Urysohn operators represent a class of examples for the conditions in  Hypothesis~\ref{hyp:Galerkin} under relatively mild assumptions. A detailed study of the topological index for such operators is considered for example in \cite{Topological}. 
					}
			\end{remark}
	
	\begin{theorem}\label{thm:Galerkin}
		Under the assumptions (1)-(2) of Hypothesis~\ref{hyp:Galerkin}, for any choice of $n$ (large enough), the projected equation 
		\begin{eqnarray}
		T_n(x_n) + f_n = x_n,
		\end{eqnarray}
		where $T_n(y) = P_nT(y)$, and $f_n = P_nf$, admits a unique solution $x^*_n$. Moreover, $x^*_n \longrightarrow x^*$ where $x^*$ is a solution to Equation~\eqref{eqn:fixed_point}. If conditions (3)-(4) of Hypothesis~\ref{hyp:Galerkin} are also satisfied, then the rate of convergence of $x^*_n$ to $x^*$ is bounded by $(1+\epsilon_n)\|P_nx^*-x^*\|$ for a sequence $\epsilon_n $ converging to zero. 
	\end{theorem}
	\begin{proof}
		We apply the framework of \cite{Topological} on Galerkin's Method (Sections 3.3 and 3.4) to our construction, and consider the projected equation $x_n = f_n P_nTx_n$. Observe that in the setup considered in this article, the images of $P_n$ are nested and the union $\cup_{n\in \mathbb N} P_n(L^p_\mu(S))$ is dense in $L^p_\mu(S)$ because the polynomials used to construct $P_n$ are an algebraic basis for all polynomials, and $p < \infty$. Then, the uniform boundedness of $P_n$ implies that $P_nx$ converges to $x$ for each $x\in L^p_\mu(S)$. With $U$ as in Hypothesis~\ref{hyp:Galerkin} (2), then the projected compact perturbations have the same topological degree as the original map for all sufficiently large $n$, since $T(\overline U)$ is relatively compact, uniform boundedness along with pointwise convergence implies that $P_nT \longrightarrow T$ uniformly on $\overline U$. This ensures the existence of solutions $x^*_n$ of the projected equation, and convergence to a solution $x^*$ of Equation~\eqref{eqn:fixed_point} as in \cite{Topological}, Section 3.3. With the additional assumptions (3)-(4) of Hypothesis~\ref{hyp:Galerkin}, and indicating by $T'$ the Fr\'echet derivative of $T$ at zero, the bound on the convergence rate can be determined by writing the identity
		\begin{eqnarray*}
			\lefteqn{(\mathbb 1-T')(P_nx^*-x^*) + (T'-P_nT')(P_nx^*-x^*)}\\ 
			&=&
			(\mathbb 1 - T')(x^*_n-x^*) + (T'-P_nT')(x^*_n-x^*)\\ 
			&&+ P_n(T'P_n-T')(x^*_n-x^*) - P_n[Tx^*_n-Tx^*-T'(x^*_n-x^*)], 
		\end{eqnarray*}
		and applying the inverse of $\mathbb 1 - T'$ whose existence is guaranteed by the assumptions as in Section 3.4 of \cite{Topological}. The sequence $\epsilon_n$ depends on the norm of $\mathbb 1 - T'$, which therefore determines the rate of convergence. 
 	\end{proof}
	
	The rate of convergence of the approximated solutions to the solution of the original equation can be studied also using the methods of \cite{Atk-Pot}.
	There is an important case where we can apply Theorem~\ref{thm:Galerkin}. This is the case also considered in Section~\ref{sec:L_2} where $p=2$ (i.e. we have the Hilbert space), and we use an orthonormal basis $p_k$. Here $\rho = 1$ is just the identity function, and the projection is orthogonal. In this case it is known that $\{P_n\}$ is uniformly bounded.
	
	 If we have a sequence of $\{p_k\}$ such that $\sum_{k=0}^\infty\frac{\|p_k\|_\infty\|p_k\|_p}{|\mathcal L(p_k^2)|} < +\infty$, as mentioned after the proof of Theorem~\ref{thm:linear_universal}, it follows that the projections $P_n$ are uniformly bounded by the finite number $\|\rho\|_q\sum_{k=0}^\infty \frac{\|p_k\|_\infty\|p_k\|_p}{G_k(p_k)}$. Therefore we can apply the procedure above. This means that upon adding a constraint during training one can ensure numerical stability. 
	
	\section{Conclusions and Future Perspectives}\label{sec:future}  
	
	The results of this article provide a general theoretical foundation for approximating nonlinear operators between Banach spaces by reducing the problem to finite-dimensional approximation. The first main contribution is a universal approximation theorem for continuous operators on arbitrary Banach spaces over compact subsets. This is achieved through the construction of Leray--Schauder type finite-dimensional approximations, which allow one to replace the original operator by a composition consisting of a continuous projection onto a finite-dimensional subspace, a finite-dimensional neural network, and an embedding back into the target space. In this way, the approximation of nonlinear operators in infinite-dimensional Banach spaces is connected directly to the classical universal approximation properties of neural networks in finite dimensions.
	
	A second contribution of the paper is the development of a more concrete framework for operator learning on spaces of functions, especially on $L^p$ spaces. In this setting, the abstract finite-dimensional reductions are realized through projections onto polynomial bases, together with a learned finite-dimensional map between the corresponding coefficient spaces. Theorem~\ref{thm:linear_universal} shows that, under the stated assumptions, this construction yields a universal approximation result for continuous operators between $L^p$ spaces. In the Hilbert space case $p=2$, the orthogonality structure gives additional sufficient conditions ensuring that the required continuity properties are satisfied. Finally, the Galerkin-type result in Theorem~\ref{thm:Galerkin} shows that the projected fixed point equations are not only well posed at the finite-dimensional level, but also that their solutions converge to a solution of the original operator equation under Hypothesis~\ref{hyp:Galerkin}. Thus, the paper establishes both an approximation-theoretic result and a convergence result for the projected operator equations that arise from the proposed learning framework.

	We now conclude with a few remarks on future work based on the results of this paper. More specifically, we describe the algorithmic perspectives that follow from our present results. The approach concerns learning operators between $L^p$ spaces in a suitable projected space. We want to both learn projections $P_n$ and $P_m$ (possibly coinciding) via learning a basis of polynomials in order to apply Theorem~\ref{thm:linear_universal}. To do so, we need the polynomials to be orthogonal in the sense of Section~\ref{sec:L_2}. This can be done using the algebraic characterization of Kowalski \cite{Kow1,Kow2}, and Xu \cite{Xu1,Xu2}. These approaches generalize the well known one-dimensional case of recursion formulas of the Favard's Theorem. Therefore, one can recursively construct a family of orthogonal polynomials. 
	
	In addition, we need to learn a mapping between the projected spaces that approximates a given operator. Our objective is to learn this operator in such a way that its solutions approximate the solutions of a projected operator equation as in Equation~\ref{eqn:fixed_point}, which is a fixed point problem. A similar type of approach for specific projections on Chebyshev polynomials (spectral methods) and integral nonlinear operators has been pursued in \cite{Spectral}. The approach described in this article is more general, and we expect that it is more widely applicable in practice. 
	
	The results obtained in this article show that the methodology described produces a universal approximator under mild additional assumptions. Moreover, as discussed in Section~\ref{sec:L_2}, in the fundamental case of $p=2$ there are some direct conditions that can be imposed guaranteeing that the continuity of the functional holds, therefore giving a direct method for obtaining universal approximators. Additionally, Section~\ref{sec:Fixed_Point} shows that upon increasing the dimension of the projections the solutions obtained in the projected spaces converge to the solutions of the operator equation that is being modeled. Therefore, the model has some good convergence properties under the framework of Hypothesis~\ref{hyp:Galerkin}.
	
	Guaranteeing that the needed assumptions are satisfied during the learning process is an interesting computational problem, and leveraging the theoretical framework described in this article in practice is of its own interest in machine learning. 
	
	%

\end{document}